\newtheorem{theorem}{Theorem}[section]
\newtheorem{corollary}[theorem]{Corollary}
\newtheorem{lemma}[theorem]{Lemma}
\newtheorem{proposition}[theorem]{Proposition}
\newtheorem{definition}[theorem]{Definition}
\newtheorem{remark}[theorem]{Remark}
\def \nmatrix #1 #2 #3 #4 {
#1=\begin{pmatrix}
 0 & #3 & #4 \\
 0 & 0  & #2 \\
 0 & 0  & 0
\end{pmatrix}
}
\def \End {\operatorname{End}}
\def\YM {\operatorname{YM}}
\def \MZ {\mathbb{R}\times\mathbb{T}\times\mathbb{Z}}
\def \M  {\mathbb{R}\times\mathbb{T}}
\def \Q {D^{c}_{\mu \,\nu}}
\def \E {E^{c}_{\mu\, \nu}}
\def \dx {\dfrac{\partial}{\partial x}}
\def \dy {\dfrac{\partial}{\partial y}}
\begin{document}

\title [On a minimum of Yang-Mills  on  Heisenberg manifolds]
       {On a minimum of Yang-Mills functional on quantum Heisenberg manifolds}

\begin{abstract}
 In this paper, we study the Yang-Mills functional on quantum Heisenberg manifolds using the appratuses developed by A. Connes and M. Rieffel. It is discovered that a connection on a projective module over a quantum Heisenberg manifold is a minimum of Yang-Mills functional whicih is a critical point that is different with critical points found by S. Kang.
\end{abstract}

\author { Hyun Ho \quad Lee }
\address {Department of Mathematical Science\\
         Seoul National University\\
         Seoul, Korea 151-747 }
\email{hyun.lee7425@gmail.com} \keywords{Yang-Mills functional,
Yang-Mills equation, quantum Heisenberg manifold, strong Morita
equivalence } \subjclass[2000]{Primary:46L05, Secondary:70S15.}
\date{ Dec 15, 2009}
\maketitle

\section {Introduction}
Classical Yang-Mills theory is concerned with the set of connections
(i.e. gauge potentials) on a vector
 bundle of a smooth manifold. The Yang-Mills functional $\YM$ measures the
 ``strength'' of the curvature of a connection. The Yang-Mills
 problem is determining the nature of the set of connections
 where $\YM$ attains its minimum, or more generally the nature of the set of
 critical points for $\YM$.  There is a well-developed non-commutative analogue of
 this setting  \cite{co1} so that we can define a non-commutative Yang-Mills problem on a $C\sp{*}$-algebra as follows.

 Let $(A,G,\alpha)$ be a $C\sp{*}$-dynamical system, where $G$ is a
 Lie group. It is said that $x$ in A is $C^{\infty}$-vector if and only
 if $g \to \alpha_g(x)$ from $G$ to the normed space is of
 $C^{\infty}$. Then $A^{\infty}=\{a\in A | \quad\text{$a$ is of $C^{\infty}$
 }\}$ is norm dense in $A$. In this case we call $A^{\infty}$ the
 smooth dense subalgebra of $A$. Since a $C\sp{*}$-algebra with a smooth
 dense subalgebra is an analogue of a smooth manifold, finitely generated
 projective $A\sp{\infty}$-modules are the appropriate generalizations of vector bundles over the
 manifold.

 \begin{lemma}\cite[Lemma 1]{co1}
 For every finite projective $A$-module $\Xi$, there exists a finite projective $A^{\infty}$-module $\Xi^{\infty}$, unique up to isomorphism,
 such that $\Xi$ is isomorphic to $\Xi^{\infty}\bigotimes_{A^{\infty}}A$.
 \end{lemma}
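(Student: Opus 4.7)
The plan is to exploit the fact that $A^{\infty}$, being the smooth vectors for a Lie group action on a $C^{*}$-algebra, is a Fréchet pre-$C^{*}$-algebra that is stable under the holomorphic functional calculus inherited from $A$; this stability passes to matrix algebras $M_n(A^{\infty}) \subset M_n(A)$. Granting this, the argument reduces to standard idempotent lifting.

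First I would represent $\Xi$ concretely: since $\Xi$ is finitely generated projective, there is an idempotent $e \in M_n(A)$ such that $\Xi \cong e A^{n}$. Using the norm density of $A^{\infty}$ in $A$, I would pick an element $a \in M_n(A^{\infty})$ with $\|a - e\|$ sufficiently small. Then the spectrum of $a$ is contained in a disjoint union of small disks around $0$ and $1$, so the function $\chi$ equal to $0$ on a neighborhood of $0$ and $1$ on a neighborhood of $1$ is holomorphic on a neighborhood of $\sigma(a)$. Applying $\chi$ and invoking closure of $M_n(A^{\infty})$ under holomorphic functional calculus produces an idempotent $e^{\infty} := \chi(a) \in M_n(A^{\infty})$ close to $e$. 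Setting $\Xi^{\infty} := e^{\infty} (A^{\infty})^{n}$ yields a finitely generated projective $A^{\infty}$-module.

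Next I would check the existence clause: because $e^{\infty}$ and $e$ are sufficiently close idempotents in $M_n(A)$, a standard similarity argument produces an invertible $u \in M_n(A)$ with $u e^{\infty} u^{-1} = e$, hence an $A$-module isomorphism $e^{\infty} A^{n} \cong e A^{n} = \Xi$. Combining this with the canonical identification $e^{\infty}(A^{\infty})^{n} \otimes_{A^{\infty}} A \cong e^{\infty} A^{n}$ gives $\Xi^{\infty} \otimes_{A^{\infty}} A \cong \Xi$.

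For uniqueness, suppose $\Xi_1^{\infty}$ and $\Xi_2^{\infty}$ both satisfy the conclusion; I would represent them by idempotents $f_1, f_2 \in M_N(A^{\infty})$ (after enlarging $N$ and using stabilization). Tensoring with $A$, the images $f_1, f_2 \in M_N(A)$ give isomorphic $A$-modules, so there exists an invertible $v \in M_N(A)$ (or partial isometry after polar decomposition) conjugating $f_1$ to $f_2$. I would then approximate $v$ by an element of $M_N(A^{\infty})$ and again apply the spectral/holomorphic calculus argument to produce an invertible in $M_N(A^{\infty})$ implementing the equivalence, yielding $\Xi_1^{\infty} \cong \Xi_2^{\infty}$ as $A^{\infty}$-modules. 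The principal technical point throughout, and the step I expect to be the main obstacle, is justifying the spectral invariance of $A^{\infty}$ in $A$ (equivalently, closure under holomorphic functional calculus in all matrix sizes), since this is what licenses every passage from $A$-level data back down to $A^{\infty}$-level data.
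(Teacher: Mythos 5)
The paper does not prove this lemma at all: it is quoted verbatim from Connes \cite[Lemma 1]{co1} and used as a black box, so there is no in-paper argument to compare yours against. Your plan is, in substance, the standard (and essentially Connes' original) proof, and the architecture is sound: realize $\Xi$ as $eA^n$, perturb $e$ to a smooth idempotent via holomorphic functional calculus, use the fact that nearby idempotents are similar for existence, and run the same approximation-plus-correction argument on an intertwiner for uniqueness.

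The one load-bearing step you defer --- spectral invariance of $A^{\infty}$ in $A$, and of $M_n(A^{\infty})$ in $M_n(A)$ --- is exactly the right thing to worry about, and for this particular $A^{\infty}$ it does hold and is not hard: if $a \in A^{\infty}$ is invertible in $A$, then $\alpha_g(a^{-1}) = \alpha_g(a)^{-1}$, and since inversion is analytic on the open set of invertibles of the Banach algebra $A$, the map $g \mapsto \alpha_g(a^{-1})$ is again $C^{\infty}$; hence $a^{-1} \in A^{\infty}$. Closure under holomorphic functional calculus then follows by writing $\chi(a)$ as a Cauchy integral of resolvents $(z-a)^{-1} \in A^{\infty}$ and checking that the integral converges in the Fr\'echet topology of $A^{\infty}$ (the seminorms built from iterated $\delta_X$'s are bounded uniformly on the contour). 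The matrix case is free because $M_n(A^{\infty})$ is precisely the algebra of smooth vectors for the entrywise action of $G$ on $M_n(A)$. Two smaller points you should make explicit if you write this up: in the existence step, the estimate forcing $\sigma(a)$ into small disks around $0$ and $1$ comes from bounding $\|a^2 - a\|$ by comparison with $e^2 - e = 0$; and in the uniqueness step, after approximating the intertwiner $v$ by a smooth $w$, the corrected element $f_2 w f_1 \bigl(f_1 w^* f_2 w f_1\bigr)^{-1/2}$ must be formed inside the corner algebra $f_1 M_N(A^{\infty}) f_1$, which requires invoking spectral invariance once more (for the continuous, here in fact holomorphic on a neighborhood of the positive spectrum, functional calculus in that corner). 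With those two points supplied, your argument is complete.
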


 In addition, an hermitian structure on $ \Xi\sp{\infty}$ is
 given by a $A\sp{\infty}$-valued positive definite inner product $<\cdot,\cdot>$ such that $<\xi, \eta>\sp{*}=<\eta, \xi>, <\xi,\eta a>=<\xi,\eta>a$  for $\xi, \eta \in \Xi\sp{\infty}$ and $a\in A\sp{\infty}$.

 If we let $\mathfrak{g}$ be the Lie-algebra of $G$, it plays the role of the tangent space of $A^{\infty}$ so that we can have `` directional derivatives'' on $A^{\infty}$.  Indeed, an infinitesimal form of $\alpha$  gives an action, $\delta$, of the Lie-algebra $\mathfrak{g}$ as follows.
 $$\delta_X(a)=\lim_{t\to 0}\dfrac{1}{t}(\alpha_{g_t}(a)-a)$$ for $X \in \mathfrak{g}$ and $a \in A^{\infty}$ where $g_t$ is the path in $G$ such that $\dot{g}_0=X$ \cite{co1}. Note that $\delta$ is the representation of $\mathfrak{g}$ in the Lie-algebra of (unbounded) derivations of $A\sp{\infty}$ preserving Lie-algebra structures.
 \begin{definition}\cite[Definition 2]{co1}\label{D:Connection}
 Given $\Xi\sp{\infty}$, a connection on $\Xi\sp{\infty}$ is a linear
 map $\nabla: \Xi\sp{\infty} \to \Xi\sp{\infty}\otimes \mathfrak{g}\sp{*} $
 such that, for all $X \in \mathfrak{g}$, $\xi \in \Xi\sp{\infty}$ and $a \in A\sp{\infty}$ one has
 \begin{equation}\label{E:connection}
 \nabla_X(\xi \cdot a)= \nabla_X(\xi)\cdot a + \xi \cdot \delta_X(a).
 \end{equation}
\end{definition}
We shall say that $\nabla$ is compatible with the hermitian metric
if and only if
\begin{equation}\label{E:compatibility}
<\nabla_X(\xi),\eta>+<\xi,\nabla_X(\eta)>=\delta_X(<\xi,\eta>)
\end{equation}
for all $\xi, \eta \in \Xi\sp{\infty}$, $X \in \mathfrak{g}$.
\begin{definition}\cite[Definition3]{co1}
Let $\nabla$ be a connection on $\Xi\sp{\infty}$, the curvature of
$\nabla$ is the element $\Theta$ of
$\End_{A\sp{\infty}}(\Xi\sp{\infty})\otimes\Lambda^2(\mathfrak{g})^*$
given by
$$\Theta_{\nabla}(X,Y)=\nabla_X\nabla_Y-\nabla_Y\nabla_X-\nabla_{[X,Y]}
$$ for all $X,Y \in \mathfrak{g}$.
\end{definition}
If $\nabla$ is compatible with the Hermitian metric, then the values
of $\Theta$ are in $E_s$, the set of skew-adjoint elements of
$E=\End_{A\sp{\infty}}(\Xi\sp{\infty})$. Since $\mathfrak{g}$ is playing the
role of the tangent space of $A\sp{\infty}$, the analogue of a
Riemannian metric on a manifold will be just an ordinary positive
inner product on $\mathfrak{g}$. With the curvature form in mind, we need the
bilinear form on the space of alternating 2-forms with values in
$E$. Then given alternating $E$-valued 2-forms $\Phi$ and $\Psi$ we
let $$\{\Phi,\Psi \}_E=\sum_{i<j}\Phi(Z_i,Z_j)\Psi(Z_i,Z_j),$$ which
is an element of $E$ where $Z_1, \cdots, Z_n$ is  an orthonormal
basis of $\mathfrak{g}$. Finally, we need an analogue of integration over a
manifold, and we need this to be $G$-invariant. Thus it is
appropriate to assume that $A\sp{\infty}$ is given a faithful trace
$\tau$ on $A\sp{\infty}$ which is invariant under the action of $\mathfrak{g}$
i.e. $\delta$-invariant so that $\tau(\delta_X(a))=0$ for all $X \in
\mathfrak{g}$ and $a\in A\sp{\infty}$. One can define an $E$-valued
inner product, $<\,,\,>_E$, by $$
<\xi,\eta>_E(\zeta)=\xi\cdot<\eta,\zeta>$$ Then every element of $E$
will be a finite linear combination of terms of form $<\xi,\eta>_E$
so that we can define a faithful trace $\tau_E$ on $E$ by
$$\tau_E(<\xi,\eta>_E)=\tau(<\xi,\eta>_{A\sp{\infty}})
$$
\begin{definition}\cite[p241]{corie}\label{D:YM1}
The Yang-Mills functional $YM$ is defined for a compatible
connection $\nabla$ by
$$ YM(\nabla)=-\tau_E(\{\Theta_\nabla,\Theta_\nabla\})$$
\end{definition}
It is not hard to show that the set of compatible connection
$CC(\Xi\sp{\infty})$ is closed under conjugation of a unitary
element of $E$. In fact, if we define $(\gamma_u(\nabla))_X
\xi=u(\nabla_X(u^*(\xi)))$ for $u \in UE$, it is easily verified
that $\gamma_u(\nabla)\in CC(\Xi\sp{\infty})$. Also, it is verified
that
$$ \Theta_{\gamma_u(\nabla)}(X,Y)=u\Theta_{\nabla}(X,Y) u^*$$ for
$X,Y \in \mathfrak{g}$, and that
$$\{\Theta_{\gamma_u(\nabla)},\Theta_{\gamma_u(\nabla)}\}=u\{\Theta_{\nabla},\Theta_{\nabla}\}u\sp{*}. $$
It follows that $$\YM(\gamma_u(\nabla))=\YM(\nabla) $$ for every $u
\in UE$ and $\nabla \in CC(\Xi\sp{\infty})$. Thus $\YM$ is a
well-defined functional on the quotient space
$CC(\Xi\sp{\infty})/UE$. If $MC(\Xi\sp{\infty})$ denotes the set of
compatible connections where $\YM$ attains its minimum, we call
$MC(\Xi\sp{\infty})/UE$ the moduli space for $\Xi\sp{\infty}$,
or more generally $\{\mbox{the set of critical points}\}/UE$ the moduli space\cite[p242]{corie}.\\

Connes and Rieffel  studied Yang-Mills for the
irrational rotation $C\sp{*}$-algebras which are non-commutative
analogue of 2-tori or non-commutative 2-tori \cite{corie} and Rieffel extended
$YM$ for the higher dimensional non-commutative tori \cite{rie}. In
view of deformation quantization, the higher dimensional
non-commutative torus is the example of deformation quantization of
$d$-dimensional torus $\mathbb{T}^d$. A further aspect of this
special deformation quantization is that the ordinary torus acts on
the non-commutative tori as a group of symmetries which is a
Lie-group action. According to Rieffel \cite{rie1}, the classical
mechanical systems which are studied possess a Lie group action of
symmetries acting on the system, and one seeks deformations which
are compatible with this Lie group action. Rieffel showed there is a
deformation quantization of the classical Heisenberg manifold,
namely, non-commutative Heisenberg manifold where the Heisenberg
group action is invariant and provided another example of a
non-commutative differential manifold \cite{rie1}. In
this short article, we investigate the Yang-Mills theory on the
non-commutative Heisenberg manifold and construct a connection which gives rise to a critical point of the Yang-Mills functional.
Moreover, we show that the connection is a minimum of the Yang-Mills functional  so that it constrasts with a certain family of connections found by S. Kang in \cite{kang}. \\

\textbf{Acknowledgements} The author learned this research, which is
still open to many questions, from Sooran Kang at GPOTS 2008 where she announced her results.  He is very grateful to her for giving him references to start with and for
answering many basic questions. Her approach to Yang-Mills on
non-commutative Heisenberg manifolds can be found in arXiv:0904.4291 or in \cite{kang}.
He also would like to thank Larry Brown for some advice.
\section{Main results}

For any positive integer $c$ let $S^{c}$ be the space of
$C^{\infty}$ functions $\phi$ on
$\mathbb{R}\times\mathbb{T}\times\mathbb{Z}$ which satisfy
\begin{itemize}
\item [(a)]$\phi(x+k,y,p)=e(ckpy)\phi(x,y,z)$ for all $k \in \mathbb{Z}$
\item [(b)]$\sup_K \|p^k\frac{\partial^{m+n}}{\partial x^m \partial y^n}\phi(x,y,p)
\| < \infty $ for all $k,m,n \in \mathbb{N}$ and  any compact set
$K$ of $\mathbb{R}\times\mathbb{T}$
\end{itemize}
We can give $S^{c}$ a $C\sp{*}$-algebra structure for each $\hbar
\in \mathbb{R}$ as follows;
\begin{itemize}
\item [(c)] $\phi \ast \psi
(x,y,p)=\sum_{q}\phi(x-\hbar(q-p)\mu,y-\hbar(q-p)\nu,q)\psi(x-\hbar
q\mu,y-\hbar q\nu,p-q)$
\item [(d)] $\phi^{\ast}(x,y,p)=\overline{\phi}(x,y,-p)$
\end{itemize}
with the norm coming from the representation on
$L^2(\mathbb{R}\times\mathbb{T}\times\mathbb{Z})$ defined by
$$\phi(f)(x,y,p)=\sum_{q}\phi(x-\hbar(q-2p)\mu,
y-\hbar(q-2p)\nu,q)f(x,y,p-q)$$ where $\mu,\nu$ are non-zero real
numbers.
 $D^{c \,\hbar}_{\mu \,\nu}$ will denote the norm completion of $S^{c}$.
 Let $G$ be the Heisenberg group given by $$(r,s,t) \leftrightarrow \begin{pmatrix}
 0 & s & t/c \\
 0 & 0  & r \\
 0 & 0  & 0
\end{pmatrix}$$
 so that when it is identified with $\mathbb{R}^3$ the product is given by  $(r,s,t)(r',s',t')=(r+r',s+s',t+t'+csr')$. Then we have a
cannonical action of
 G on $D^{c \,\hbar}_{\mu \,\nu}$ by
 $$\alpha_{(r,s,t)}(\phi)(x,y,p)=e(p(t+cs(x-r)))\phi(x-r,y-s,p)$$
 which comes from a left action of $G$ on the Heisenberg manifold and
 $(D^{c \,\hbar}_{\mu \,\nu},G,\alpha)$ is a $C\sp{*}$-dynamical system
 \cite[p557]{rie1}.

 From now on we only consider the case $\hbar=1$, and thus
 $D^{c}_{\mu \,\nu}$ will denote the corresponding $C\sp{*}$-algebra named by the quantum Heisenberg manifold.
 While the above definition was exploited for the strict deformation quantization of the Heisenberg manifold by Rieffel, we prefer the following description of $D^{c}_{\mu
 \,\nu}$.
 \begin{theorem}\cite[p17]{ab1}
 $D^{c}_{\mu \,\nu}$ is the closure in the multiplier algebra of
 $C_{0}(\mathbb{R}\times\mathbb{T})\times_{\lambda}\mathbb{Z}$ of
 the $*$-subalgebra $D_0$ consisting of functions $\phi$ in $C(\MZ)$
 which have compact support on $\mathbb{Z}$ and satisfy
 \begin{equation*}\label{E:1}
 \phi(x+k,y,p)=e(-ckp(y-p\nu))\phi(x,y,p)
 \end{equation*}
 for all $k,p \in
 \mathbb{Z}$, and $(x,y) \in \M$ where $\lambda(x,y)=(x+2\mu,y+2\nu)$
 \end{theorem}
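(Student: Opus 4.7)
My plan is to exhibit an explicit $*$-isomorphism from $D^{c}_{\mu\,\nu}$ (built from $S^{c}$ by (c)--(d)) onto the norm-closure of $D_{0}$ inside the multiplier algebra $M(C_{0}(\M)\times_{\lambda}\mathbb{Z})$, by means of a pointwise phase twist that converts the $S^{c}$-quasi-periodicity (a) into the rule stated in the theorem. A preliminary point is to check that $D_{0}$ actually sits inside the multiplier algebra: for $\phi\in D_{0}$ the quasi-periodicity makes $|\phi(\cdot,\cdot,p)|$ a bounded continuous function on $\M$ for every $p$, and compact support in $p$ then ensures that $\phi$ acts as a bounded double centralizer on the crossed product via
\[
(\phi\cdot f)(x,y,p)=\sum_{q}\phi(x,y,q)\,f(x-2q\mu,y-2q\nu,p-q),
\]
together with the analogous right-multiplication formula.

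I would then construct $T\colon S^{c}\to D_{0}$ by $(T\phi)(x,y,p)=e(h(x,y,p))\phi(x,y,p)$, where the real-valued exponent $h$ is forced by the requirement that (a) be converted into the $D_{0}$-quasi-periodicity. Comparing the two rules yields the cocycle condition $h(x+k,y,p)-h(x,y,p)\equiv -2ckpy+ckp^{2}\nu\pmod 1$, which is satisfied by $h(x,y,p)=-2cpxy+cp^{2}\nu x$; this choice preserves smoothness and the Schwartz-type decay in $p$ required by (b). Compatibility with the involutions, $T(\phi^{*})=(T\phi)^{*}$, is immediate from (d).

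The central step is to verify the algebraic identity $T(\phi*\psi)=(T\phi)\cdot(T\psi)$ for $\phi,\psi\in S^{c}$, where $*$ denotes the twisted convolution (c) and $\cdot$ the multiplier product above. After substituting the explicit form of $T$ and shifting the summation index, this reduces to a phase identity: the exponent produced by $h$ evaluated at the shifted argument of $T\psi$ must combine with $h(x,y,q)$ to recover $h(x,y,p)$, with the mismatch between the shifts $(q-p)\mu$ in (c) and the shifts $2q\mu$ dictated by the crossed product absorbed through the $S^{c}$-quasi-periodicity (a) applied to these extra shifts. Once this identity is established, multiplication by $e(h)$ implements a unitary equivalence on $L^{2}(\MZ)$ between the defining representation of $S^{c}$ and the regular representation of $D_{0}\subset M(C_{0}(\M)\times_{\lambda}\mathbb{Z})$, whence $T$ is isometric and extends to a $*$-isomorphism of $C^{*}$-completions.

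The step that will require the most care is the phase bookkeeping of the previous paragraph: the match of the two convolution conventions is precisely where the quadratic correction $cp^{2}\nu$ in the new quasi-periodicity appears, and verifying the cocycle condition satisfied by $h$ modulo integers is the delicate calculation. Once $h$ is pinned down, density of $S^{c}$ and $D_{0}$ in their respective $C^{*}$-completions, and the identification of the image with the norm-closure inside $M(C_{0}(\M)\times_{\lambda}\mathbb{Z})$, are routine.
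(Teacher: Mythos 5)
The paper does not actually prove this statement: it is imported verbatim from Abadie \cite[p17]{ab1}, and the remark that follows it records the real mechanism, namely that $D^{c}_{\mu\,\nu}$ is the \emph{generalized fixed point algebra} of $C_{0}(\M)\times_{\lambda}\mathbb{Z}$ under the proper $\mathbb{Z}$-action $(\rho_k\phi)(x,y,p)=e(ckp(y-p\nu))\phi(x+k,y,p)$, identified with Rieffel's $D^{c}_{\mu\,\nu}$ via his change-of-variables map $J$ of \cite[p547]{rie1}. Your plan replaces all of this with a single pointwise phase twist $T\phi=e(h)\phi$, and that cannot work for two concrete reasons. First, your exponent $h(x,y,p)=-2cpxy+cp^{2}\nu x$ is not well defined on $\MZ$: $h(x,y+1,p)-h(x,y,p)=-2cpx$ is not an integer for generic $x\in\mathbb{R}$, so $e(h)$ is not periodic in the $\mathbb{T}$-variable and $T$ does not even map functions on $\MZ$ to functions on $\MZ$. (The cocycle identity in $x$ that you check is fine; it is the periodicity in $y$ that fails, and any $h$ containing the term $-2cpxy$ needed to produce the factor $-2ckpy$ will fail it.) Second, and more fundamentally, the product (c) shifts the arguments of the two factors by $(q-p)\mu$ and $q\mu$ respectively, while the crossed-product multiplication shifts only the second factor, by $2q\mu$. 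A multiplication operator cannot change arguments, and your proposal to absorb the discrepancy ``through the quasi-periodicity (a)'' fails because (a) only relates values of $\phi$ at $x$ and $x+k$ for \emph{integer} $k$, whereas the offending shifts are by real multiples of $\mu$ (generically irrational). The same obstruction kills the claimed unitary equivalence of representations: the defining representation of $S^{c}$ on $L^{2}(\MZ)$ given in the paper does not translate the argument of $f$, while the regular representation of the crossed product does, and conjugation by a multiplication operator preserves the class of non-translating operators.

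What is genuinely needed in place of $T$ is a $p$-dependent \emph{translation} in $(x,y)$, of the form $(J\phi)(x,y,p)=\phi(x\pm p\mu,\,y\pm p\nu,\,p)$, which simultaneously symmetrizes/desymmetrizes the convolution and converts the quasi-periodicity (a) into the rule $\phi(x+k,y,p)=e(-ckp(y-p\nu))\phi(x,y,p)$ (this is exactly where the quadratic correction $p^{2}\nu$ comes from, via $y\mapsto y\pm p\nu$, not from a phase). Even granting that, the remaining step you call ``routine'' is not: identifying the $C^{*}$-completion of $S^{c}$ (in the norm of its defining representation) with the norm closure of $D_{0}$ inside the multiplier algebra is precisely the content of Abadie's generalized-fixed-point-algebra theorem, which requires the properness of the action $\rho$ and a Rieffel-induction argument to see that the two norms agree. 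Your preliminary observation that $D_{0}$ consists of bounded multipliers is correct, but the proof as proposed does not close.
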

 \begin{remark}
 In fact, $D^{c}_{\mu \,\nu}$ is the generalized fixed point algebra of $C_{0}(\mathbb{R}\times\mathbb{T})\times_{\lambda}\mathbb{Z}$
 under the action $\rho$ of $\mathbb{Z}$ given by $(\rho_k \phi)(x,y,p)=e(ckp(y-p\nu))\phi(x+k,y,p)$ \cite[Proposition 2.8]{ab1}. Under the map $J$
 given in \cite[p547]{rie1}, it is identified with the previous $D^{c}_{\mu \, \nu}$.
 \end{remark}
 As is shown in \cite{ab2}, there is a faithful trace $\tau_{D}$ on $\Q$ defined for $\phi \in
 \Q$, by
 \begin{equation}\label{E:trace}
 \tau_{D}(\phi)=\int_{\mathbb{T}^2}\phi(x,y,0)dxdy
 \end{equation}

The virtue of this definition $D^{c}_{\mu \,\nu}$ is the much simpler Morita
equivalence picture than the original Morita equivalence found by
Rieffel(For the latter, see \cite[Theorem 5.5]{rie2}).
\begin{theorem}\cite[Theorem 2.12]{ab1}\label{T:morita}
 Let $E^{c}_{\mu \,\nu}$  be the closure in the multiplier algebra of
 $C(\M)\times_{\sigma}\times\mathbb{Z}$ of the $\ast$-subalgebra
 $E_0$ consisting of functions $\psi$ in $C(\MZ)$ with compact
 support on $\mathbb{Z}$ and satisfy
 $$\psi(x-2p\mu,y-2p\nu,k)=e(ckp(y-p\nu))\psi(x,y,k)$$ for all $k,p \in
 \mathbb{Z}$, and $(x,y)\in \M$ where $\sigma(x,y)=(x-1,y)$. Then $E^{c}_{\mu
 \,\nu}$ and $D^{c}_{\mu \,\nu}$ are strong-Morita equivalent.
\end{theorem}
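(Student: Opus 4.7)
The plan is to exhibit an explicit $\E$--$\Q$ imprimitivity bimodule and invoke Rieffel's criterion for strong Morita equivalence. The key structural observation is that the two $\mathbb{Z}$-actions $\sigma(x,y)=(x-1,y)$ and $\lambda(x,y)=(x+2\mu,y+2\nu)$ on $\M$ commute, and the covariance conditions defining $E_0$ and $D_0$ are \emph{dual}: the former encodes $\lambda$-covariance, the latter $\sigma$-covariance. This is the Green--Rieffel situation, where two commuting free and proper $\mathbb{Z}$-actions on a common space produce a Morita equivalence between the associated (possibly twisted) crossed product algebras, with the twist here carried by the cocycle $e(ckp(y-p\nu))$.

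I would take the pre-bimodule $X_0$ to be a Schwartz-type space of smooth functions on $\M$, chosen with enough decay for all the sums below to converge absolutely. The left $E_0$-action is the $\sigma$-convolution
\[
(\psi\cdot f)(x,y)=\sum_{k\in\mathbb{Z}}\psi(x,y,k)\,f(x-k,y),
\]
and the right $D_0$-action is the dual $\lambda$-convolution
\[
(f\cdot\phi)(x,y)=\sum_{p\in\mathbb{Z}}f(x+2p\mu,y+2p\nu)\,\phi(x,y,p),
\]
possibly decorated with cocycle phase factors to match the covariance defining $E_0$ and $D_0$. The two actions commute because $\sigma$ and $\lambda$ do.

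The two inner products are then constructed by averaging over the \emph{opposite} orbit in order to produce the required covariance. Concretely, the $E_0$-valued inner product involves a sum over $p\in\mathbb{Z}$, producing the $\lambda$-covariance demanded by $E_0$,
\[
{}_E\langle f,g\rangle(x,y,k)=\sum_{p\in\mathbb{Z}}\chi_p(x,y,k)\,f(x+2p\mu,y+2p\nu)\,\overline{g(x-k+2p\mu,y+2p\nu)},
\]
and symmetrically $\langle f,g\rangle_D(x,y,p)$ is built by summing over $k\in\mathbb{Z}$ so as to land in $D_0$. One then verifies in turn: (i) well-definedness and covariance of both actions and both inner products, (ii) the associativity identity ${}_E\langle f,g\rangle\cdot h=f\cdot\langle g,h\rangle_D$ together with the conjugation symmetries, (iii) positivity via a Cauchy--Schwarz-type estimate, and (iv) fullness, namely that the closed linear spans of the ranges of ${}_E\langle\cdot,\cdot\rangle$ and $\langle\cdot,\cdot\rangle_D$ are all of $\E$ and $\Q$ respectively, which reduces to approximating a separating family of generators in each algebra by smooth, compactly supported test functions in $X_0$.

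The main obstacle is the cocycle bookkeeping: the exponential $e(ckp(y-p\nu))$ inserts nontrivial phases at every invocation of covariance, and the phase factors $\chi_p$ built into the actions and inner products must be chosen so that every identity closes up without leftover exponential residue. Once these phases are aligned --- essentially forced by requiring $\sigma$-equivariance on the $D$-side and $\lambda$-equivariance on the $E$-side --- positivity and associativity follow by manipulations parallel to Rieffel's imprimitivity bimodule for the irrational rotation algebras, and completing $X_0$ in either of the naturally induced norms yields the desired $\E$--$\Q$ imprimitivity bimodule, whence the strong Morita equivalence.
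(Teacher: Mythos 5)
The paper does not actually prove this statement --- it is quoted verbatim from Abadie \cite[Theorem 2.12]{ab1} --- but the Remark immediately following it records precisely the imprimitivity bimodule you describe: the completion of $C_c(\mathbb{R}\times\mathbb{T})$ with the two explicit inner products ${}_{D}\langle\cdot,\cdot\rangle$ and $\langle\cdot,\cdot\rangle_{E}$, which are exactly the formulas your placeholder phases $\chi_p$ must reduce to. Your strategy (the commuting $\sigma$- and $\lambda$-translations, the dual covariance conditions, and Rieffel's proper-action/imprimitivity machinery applied to the generalized fixed-point algebras) is the same as Abadie's, so the proposal follows the approach of the cited source, with the cocycle bookkeeping, positivity, and fullness that you defer being the only substantive content left to verify.
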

\begin{remark}
The bi-module $\Xi$ is the completion of $C_{c}(\mathbb{R}\times\mathbb{T})$ with respect to either one of
the norms induced by $_{D}< \cdot,\cdot>$ and $<\cdot,\cdot>_E$ respectively, given by
\begin{equation*}\label{E:rightinner}
_{D^{c}_{\mu\,\nu}}<f,g>(x,y,p)=\sum_k
e(ckp(y-p\nu))f(x+k,y)\overline{g}(x-2p\mu+k,y-2p\nu)
\end{equation*}
\begin{equation*}\label{E:leftinner}
<f,g>_{E^{c}_{\mu\,\nu}}(x,y,k)=\sum_p e(-ckp(y-p\nu))\overline{f}(x-2p\mu,y-2p\nu)g(x-2p\mu+k,y-2p\nu)
\end{equation*}
for $f,g \in C_{c}(\mathbb{R}\times\mathbb{T})$\cite{ab1}.\\
\end{remark}

Since we follow the original approach of Abadie, $\Xi$ is a left-$\Q$ and right-$\E$ bi-module. However, recall that the original definitions of connection, curvature, Yang-Mills functional, etc. were stated in the context of the right Hilbert module over a $C\sp{*}$-algebra. Thus, to define Yang-Mills problem for the quantum Heisenberg manifold, we need to work with the left-$\E$ and right-$\Q$ bi-module. This can be done using the dual module $\widetilde \Xi$. In fact, if $X$ is an $A-B$-bimodule, i.e. a left $A$ and right-$B$ module, let $\widetilde X$ be the conjugate vector space, so that there is an additive map $\flat:X \to \widetilde X$ such that $\flat(\lambda \cdot x)=\bar{\lambda}\cdot\flat(x)$. Then $\widetilde X$ is a $B-A$-bimodule with
\[
\begin{aligned}
b\cdot\flat(x)&=\flat(x \cdot b\sp{*})   \\
_B<\flat(x),\flat(y)> &=<x,y>_B \quad &
 \end{aligned}
 \begin{aligned}
 \flat(x)\cdot a &=\flat(a\sp{*}\cdot x)\\
 <\flat(x),\flat(y)>_A &=_A<x,y>
 \end{aligned}
 \]
 for $x,y \in X$, $a\in A$, and $b \in B$.
 Applying the fact just explained to $\Xi$, we obtain a left-$\E$ and right-$\Q$ bi-module $\widetilde \Xi$ from $\Xi$. But in practice we will not distinguish $\widetilde \Xi$ and $\Xi$ so that we may not use $\flat$.
\begin{corollary}\label{C:Morita}
$\Xi$ is a finitely generated $\Q$-module and $\End_{\Q}(\Xi)$ is
isomorphic to $E^{c}_{\mu\,\nu}$ via the map $f \to f\cdot\psi\sp{*}$ for $\psi \in \E$.
\end{corollary}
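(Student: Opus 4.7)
The corollary is a standard consequence of the strong Morita equivalence between $\Q$ and $\E$ provided by Theorem~\ref{T:morita}, combined with the passage from $\Xi$ to the dual bi-module $\widetilde\Xi$ explained just before the statement. My plan is to make this translation explicit in the present setting.

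Both $\Q$ and $\E$ are unital (the Heisenberg manifold being compact). Since $\Xi$ is an imprimitivity bi-module, the linear span of $\{<f,g>_\E : f,g\in\Xi\}$ is a dense ideal of $\E$, hence equals $\E$, so $1_\E = \sum_{j=1}^{n}<h_j,k_j>_\E$ for some finite choice of $h_j,k_j\in\Xi$. The standard compatibility identity ${}_\Q<f,g>\cdot h = f\cdot<g,h>_\E$ for an imprimitivity bi-module then yields the reconstruction formula
\[
f = f\cdot 1_\E = \sum_{j=1}^{n}{}_\Q<f,h_j>\cdot k_j,
\]
which displays $\Xi$ as finitely generated on the left over $\Q$; after dualization, $\widetilde\Xi$ is finitely generated on the right over $\Q$, proving the first assertion.

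For the algebra isomorphism I would define $\rho:\E\to\End_\Q(\Xi)$ by $\rho(\psi)(f):=f\cdot\psi^*$, where $f\cdot\psi^*$ refers to the original right $\E$-action. Associativity of the bi-module actions shows that $\rho(\psi)$ commutes with the right $\Q$-action inherited from $\widetilde\Xi$ (which, after dropping $\flat$, sends $(\phi,f)$ to $\phi^*\cdot f$), and $\rho(\psi_1\psi_2)(f) = f\cdot\psi_2^*\psi_1^* = \rho(\psi_1)(\rho(\psi_2)(f))$ shows $\rho$ is a $*$-homomorphism. Injectivity is immediate: if $f\cdot\psi^*=0$ for every $f$, then $\psi<g,f>_\E = 0$ for every $f,g$, so $\psi$ annihilates a dense subset of $\E$ and must vanish by unitality.

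Surjectivity is the only nontrivial point. Given $T\in\End_\Q(\Xi)$, apply $T$ to the reconstruction formula above, using right $\Q$-linearity of $T$ (equivalently, left $\Q$-linearity in the undualized picture) together with the compatibility identity to compute
\[
T(f) = \sum_{j=1}^{n}{}_\Q<f,h_j>\cdot T(k_j) = f\cdot\sum_{j=1}^{n}<h_j,T(k_j)>_\E,
\]
so $T=\rho(\psi)$ with $\psi := \bigl(\sum_j<h_j,T(k_j)>_\E\bigr)^*$. The principal obstacle is not conceptual but rather bookkeeping: one must keep careful track of the conversions between the left/right actions of $\Q$ on $\Xi$ and $\widetilde\Xi$ so that the adjoints land on the correct factors and the explicit formula $f\mapsto f\cdot\psi^*$ is obtained in the form stated in the corollary.
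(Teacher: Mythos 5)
Your proposal is correct and takes essentially the same route as the paper: the paper's proof simply observes that both $\Q$ and $\E$ are unital and cites the standard Morita-theory fact (Rieffel, Proposition 2.1 of \cite{rie2}) whose proof you have written out in full via the reconstruction formula $1_\E=\sum_j<h_j,k_j>_\E$. Your expanded version matches the cited argument, so there is nothing to add.
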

\begin{proof}
 Note that both $E^{c}_{\mu \,\nu}$ and $\Q$ have identity elements \cite[p309]{ab2}. From the strong Morita equivalence,
 it is well known that $\Xi$ is a finitely generated and the endomorphism ring $\End_{\Q}(\Xi)$ is equal to
$E^{c}_{\mu \, \nu}$(see \cite[Proposition 2.1]{rie2}).
\end{proof}
For our purpose, we note the right action of $\Q$ on $\Xi$ is given by
\begin{equation*}
(f\cdot
\phi)(x,y)=\sum_p\phi\sp{*}(x,y,p)f(x-2p\mu,y-2p\nu)=\sum_p\overline{\phi}(x+2p\mu,y+2p\nu,p)f(x+2p\mu,y+2p\nu)
\end{equation*}
for $\phi \in \Q$ and $f\in \Xi$\\
and $\Q$-valued inner product by
\begin{equation*}\label{E:inner}
<f,g>_{\Q}(x,y,p)=\sum_k
\bar{e}(ckp(y-p\nu))f(x+k,y)\overline{g}(x-2p\mu+k,y-2p\nu)
\end{equation*}
for $f,g \in \Xi$.\\

Under the map $J$ in \cite{rie1}, $G$ act on $\Q $ by
\begin{equation}\label{E:action}
\alpha_{(r,s,t)}(\phi)(x,y,p)=e(p(t+cs(x-p\mu-r)))\phi(x-r,y-s,p)
\end{equation}
for $\phi \in \Q$ and $(\Q, G, \alpha)$ becomes a
$C\sp{*}$-dynamical system. Also, we can check that $\tau$ is
$\delta$-invariant using (\ref{E:trace}). Since we never work with
$\Q$ and $\Xi$, but only with $C^{\infty}$ versions, we shall denote
the latter by $\Q$ and $\Xi$. It is easy to see that the Lie algebra
of the Heisenberg group $G$ has an orthonormal basis consisting of
$$\nmatrix X 1 0 0 , \nmatrix Y 0 1 0 ,\nmatrix Z 0 0 1/c .$$

Then we have the derivations corresponding to this basis.
The associated derivations $\delta_X$, $\delta_Y$, $\delta_Z$ are given by
\begin{eqnarray*}
\delta_{X}(\phi)(x,y,p)&=&-\dx
\phi(x,y,p),\\
\delta_{Y}(\phi)(x,y,p)&=&-\dy \phi(x,y,p)+2\pi i c
p(x-p\mu)\phi(x,y,p),\\
\delta_{Z}(\phi)(x,y,p)&=& 2\pi i p \,\phi(x,y,p).
\end{eqnarray*}
In our case, the exponential map from $\mathfrak{g}.$ to $G$ is given by $exp(A)=I+\sum_{n=1}A^n/n!$. Using the action of G
defined as (\ref{E:action}), we can compute
$\delta_X(\phi)=\lim_{t\to
0}\dfrac{1}{t}(\alpha_{exp(tX)}(\phi)-\phi)$ for $X \in
\mathfrak{g}$ and $\phi \in \Q$. Here we view $(r,s,t)(\in G)$ as $\left(\begin{smallmatrix}
                                                             1&s&t/c \\
                                                             0&1&r \\
                                                             0&0&1
                                                             \end{smallmatrix}\right)$.

Thus the smooth dense subalgebra of $\Q$ consisting of smooth vectors for the above derivations is the space of
complex valued functions $f$ on $\MZ$ satisfying (\ref{E:1}) such that $f$ is rapidly decreasing in the direction of $\mathbb{R}$ and $\mathbb{Z}$
and smooth with respect to $\mathbb{T}$ and the
corresponding finitely generated module is the Schwartz space of
complex valued functions on the $\M$.

Following the approach in the case of non-commutative torus \cite{corie}, it might be natural to attempt
``first order derivatives'' for the connection so that we define a linear map $\nabla$ on $\Xi$ by
\begin{subequations}\label{E:nabla}
\begin{gather}
(\nabla^0_{X}f)(x,y)=-\frac{\partial}{\partial x}f(x,y), \label{E:nabla-1}\\
(\nabla^0_{Y}f)(x,y)=-\frac{\partial}{\partial y}f(x,y)+\frac{\pi c
i}{2\mu}x^2f(x,y),\label{E:nabla-2}\\
(\nabla^0_{Z}f)(x,y)=\frac{\pi i x}{\mu}f(x,y)\label{E:nabla-3}
\end{gather}
\end{subequations}
for $f \in \Xi$ and  $X, Y, Z$ of $\mathfrak{g}$ as above.
\begin{proposition}
The linear map $\nabla^0$ defined as \eqref{E:nabla} is a compatible connection.
\end{proposition}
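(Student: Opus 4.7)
The plan is to verify the two defining axioms of a compatible connection—the Leibniz identity \eqref{E:connection} and the metric-compatibility identity \eqref{E:compatibility}—separately on each basis element $X$, $Y$, $Z$. By linearity in the Lie-algebra direction this is enough. In each case the work reduces to plugging in the explicit formulas for $\nabla^{0}$, the right action of $\Q$ on $\Xi$, the $\Q$-valued inner product, and the derivations $\delta_{X},\delta_{Y},\delta_{Z}$, and then differentiating term by term under the sum in $p$ (or in $k$).

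For the Leibniz rule the $X$-direction is immediate: $-\partial_{x}$ distributes across the product $\overline{\phi}(x+2p\mu,y+2p\nu,p)f(x+2p\mu,y+2p\nu)$ and yields exactly $f\cdot\delta_{X}(\phi)+\nabla^{0}_{X}(f)\cdot\phi$. For $Z$, after the argument shift the multiplier $\pi i x/\mu$ becomes $\pi i (x+2p\mu)/\mu$, whose extra $2\pi i p$ cancels the $-2\pi i p$ coming from $\overline{\delta_{Z}(\phi)}$, restoring $\nabla^{0}_{Z}(f\cdot\phi)$. The real content is in the $Y$-direction: comparing the two sides reduces to the polynomial identity
$$\frac{\pi c i}{2\mu}(x+2p\mu)^{2}-2\pi i c p(x+p\mu)=\frac{\pi c i}{2\mu}x^{2}.$$
This is precisely what forces the coefficient $\pi c i/(2\mu)$ in \eqref{E:nabla-2} and makes the Leibniz rule close.

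The compatibility check has the same structure. For $X$ one just differentiates $\langle f,g\rangle_{\Q}$ in $x$; the phase factor $\bar{e}(ckp(y-p\nu))$ is $x$-independent, and the two resulting terms are exactly $\langle\nabla^{0}_{X}f,g\rangle_{\Q}$ and $\langle f,\nabla^{0}_{X}g\rangle_{\Q}$. For $Z$, the multipliers $\pi i (x+k)/\mu$ from $\nabla^{0}_{Z}f$ and $-\pi i (x-2p\mu+k)/\mu$ from $\overline{\nabla^{0}_{Z}g}$ add to the constant $2\pi i p$, which is $\delta_{Z}$ acting on $\langle f,g\rangle_{\Q}$. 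For $Y$, $-\partial_{y}$ acting on the phase produces $2\pi i ckp$, which together with the $2\pi i c p(x-p\mu)$ coming from $\delta_{Y}$ must be matched by $\frac{\pi c i}{2\mu}\bigl[(x+k)^{2}-(x-2p\mu+k)^{2}\bigr]$; the factorization $(x+k)^{2}-(x-2p\mu+k)^{2}=4p\mu(x+k-p\mu)$ makes this exact.

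The main obstacle, as far as I can see, is not conceptual but notational: one must track the index shifts $x\mapsto x+2p\mu$, $y\mapsto y+2p\nu$ (or $x\mapsto x+k$) and the sign flips produced by complex conjugation in the second slot of the inner product and in the right action. The only honest algebraic content is the single quadratic identity appearing in the $Y$-direction of each half of the proof; once this identity has been isolated, the remainder of the verification is the ordinary chain rule together with the fact that $\delta_{W}$ is a derivation on $\Q$.
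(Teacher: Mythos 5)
Your proposal is correct and follows essentially the same route as the paper: verify the Leibniz identity and the metric compatibility separately on the basis $X$, $Y$, $Z$ by direct substitution, with the only real algebraic content being the quadratic identities $\frac{\pi c i}{2\mu}(x+2p\mu)^{2}-2\pi i c p(x+p\mu)=\frac{\pi c i}{2\mu}x^{2}$ and $\frac{\pi c i}{2\mu}\bigl[(x+k)^{2}-(x-2p\mu+k)^{2}\bigr]=2\pi i cp(x+k-p\mu)$, both of which you isolate exactly as the paper's computation does. The cancellation mechanisms you describe for the $X$- and $Z$-directions also match the paper's verification.
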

\begin{proof}
First, we need to verify that $\nabla^0$ satisfies the condition
(\ref{E:connection}) in Definition \ref{D:Connection}. 

It is enough to check the condition for the basis.
\begin{enumerate}
\item[(i)] $\nabla^0_X(f \cdot \phi )=\nabla^0_X(f)\cdot \phi +
 f \cdot \delta_X(\phi)$ is
follows from the Leibnitz rule.

\item[(ii)] Note that
\begin{equation*}
\begin{split}
&((\nabla^0_{Y} f)\cdot \phi)(x,y)= -\sum_p \bar{\phi}(x+2p\mu,y+2p\nu,p)
\dfrac{\partial}{\partial y}f(x+2p\mu,y+2p\nu))\\
&+ \sum_p\bar{\phi}(x+2p\mu,y+2p\nu,p)\dfrac{\pi c i}{2\mu}(x+2p\mu)^2f(x+2p\mu,y+2p\nu),\\
&( f\cdot \delta_{Y}(\phi))(x,y)= -\sum_p \dfrac{\partial}{\partial
y}\bar{\phi}(x+2p\mu,y+2p\nu,p)f(x+2p\mu,y+2p\nu)\\
&-\sum_{p}2\pi i c p(x+p\mu)\bar{\phi}(x+2p\mu,y+2p\nu,p)f(x+2p\mu,y+2p\nu). \\
\end{split}
\end{equation*}
Thus
\[
\begin{split}
&(\nabla^0_Y(f)\cdot \phi) (x,y) +
 (f \cdot \delta_Y)(\phi)(x,y) = -\sum_p \bar{\phi}(x+2p\mu,y+2p\nu,p)\\
&\dfrac{\partial}{\partial y}f(x+2p\mu,y+2p\nu)
-\sum_p \dfrac{\partial}{\partial
y}\bar{\phi}(x+2p\mu,y+2p\nu,p)f(x+2p\mu,y+2p\nu)\\
&+\sum_p\bar{\phi}(x+2p\mu,y+2p\nu,p)[\dfrac{\pi c i}{2\mu}(x+2p\mu)^2-2\pi i c p(x+p\mu)]f(x+2p\mu,y+2p\nu)\\
&=-\dfrac{\partial}{\partial y}(f \cdot \phi)(x,y)+\dfrac{\pi c i}{2\mu}x^{2}(f \cdot \phi
)(x,y)=\nabla^0_{Y}(f \cdot \phi)(x,y).
\end{split}
\]
\item [(iii)] Similarly, note that
\begin{align*}
((\nabla^0_{Z}f)\cdot \phi)(x,y)&= \sum_p \dfrac{\pi i(x+2p\mu)}{\mu}\bar{\phi}(x+2p\mu,y+2p\nu,p)f(x+2p\mu,y+2p\nu), \\
(f\cdot(\delta_{Z}(\phi))(x,y)&= -\sum_p 2\pi i p \bar{\phi}(x+2p\mu,y+2p\nu,p)f(x+2p\mu,y+2p\nu).
\end{align*}
Then
\begin{align*}
&(\nabla^0_{Z}( f \cdot \phi))(x,y)=\dfrac{\pi i x}{\mu}\sum_p \bar{\phi}(x+2p\mu,y+2p\nu,p) f(x+2p\mu,y+2p\nu)\\
&= \sum_p\left(\dfrac{\pi i (x+2p\mu)}{\mu}-2\pi i
p\right)\bar{\phi}(x+2p\mu,y+2p\nu,p)f(x+2p\mu,y+2p\nu)\\
&=((\nabla^0_{Z}f)\cdot \phi)(x,y)+(f\delta_{Z}(\phi))(x,y)
\end{align*}
\end{enumerate}
We also need to check the compatibility (\ref{E:compatibility}).

\begin{enumerate}
\item[(i)] By the product rule of differentiation, it follows
that
$\delta_{X}(<f,g>_{\Q})=<\nabla^0_{X}f,g>_{\Q}+<f,\nabla^0_{X}(g)>_{\Q}$.
\item[(ii)] Note that 
\begin{align*}
&\delta_{Y}(<f,g>)(x,y,p) =-\dy(<f,g>)(x,y,p)+2\pi i c p(x-p\mu)(<f,g>)(x,y,p)\\
&=\sum_k ((2\pi i ckp)+2\pi i c p (x-p\mu)) \bar{e}(ckp(y-p\nu))
f(x+k,y)\bar {g}(x-2p\mu+k,y-2p\nu)\\
&-\sum_k \bar{e}(ckp(y-p\nu))\dy f(x+k,y)
\bar{g}(x-2p\mu+k,y-2p\nu)\\
&-\sum_k \bar{e}(ckp(y-p\nu)) f(x+k,y)\dy
\bar{g}(x-2p\mu+k,y-2p\nu),\\
&<\nabla^0_{Y}f,g>(x,y,p)=-\sum_k \bar{e}(ckp(y-p\nu))\dy f(x+k,y)\bar{g}(x-2p\mu+k,y-2p\nu)\\
&+ \sum_k \dfrac{\pi ci}{2\mu}(x+k)^2 \bar{e}(ckp(y-p\nu))f(x+k,y)\bar{g}(x-2p\mu+k,y-2p\nu)),\\
&<f,\nabla^0_{Y}g>(x,y,p)= -\sum_k \bar{e}(ckp(y-p\nu)) f(x+k,y)\dy\bar{g}(x-2p\mu+k,y-2p\nu)\\
&-\sum_k \dfrac{\pi ci}{2\mu}(x-2p\mu+k)^2 \bar{e}(ckp(y-p\nu))f(x+k,y)\bar{g}(x-2p\mu+k,y-2p\nu)).
\end{align*}
Thus 
\begin{align*}
&<\nabla^0_{Y}f,g>(x,y,p)+<f,\nabla^0_{Y}g>(x,y,p)=\sum_k \left(\dfrac{\pi c i}{2\mu}(x+k)^2- \dfrac{\pi c
i}{2\mu}(x-2p\mu+k)^2\right)\\
&\bar{e}(ckp(y-p\nu))f(x+k,y)\bar {g}(x-2p\mu+k,y-2p\nu)\\
&-\sum_k \bar{e}(ckp(y-p\nu))\dy f(x+k,y)
\bar{g}(x-2p\mu+k,y-2p\nu)\\
&-\sum_k \bar{e}(ckp(y-p\nu)) f(x+k,y)
\dy\bar{g}(x-2p\mu+k,y-2p\nu)\\
&=\sum_k \left(2\pi icp(x-p\mu)+2\pi i cpk \right)\bar{e}(ckp(y-p\nu))f(x+k,y)\bar {g}(x-2p\mu+k,y-2p\nu)\\
&-\sum_k \bar{e}(ckp(y-p\nu))\dy f(x+k,y)
\bar{g}(x-2p\mu+k,y-2p\nu)\\
&-\sum_k \bar{e}(ckp(y-p\nu)) f(x+k,y)
\dy\bar{g}(x-2p\mu+k,y-2p\nu)\\
&=\delta_{Y}(<f,g>)(x,y,p).
\end{align*}

\item[(iii)] Note that
\[<\nabla^0_{Z}(f),g>(x,y,p)=\sum_k \bar{e}(ckp(y-p\nu))\dfrac{\pi i(x+k)}{\mu}f(x+k,y)\bar{g}(x-2p\mu+k,y-2p\nu),\]
\[<f,\nabla^0_{Z}(g)>(x,y,p)=\sum_k \bar{e}(ckp(y-p\nu)) f(x+k,y)\overline{\dfrac{\pi
i(x-2p\mu+k)}{\mu}}\bar{g}(x-2p\mu+k,y-2p\nu).\]
Hence,
\begin{align*}
\begin{split}
&<\nabla^0_{Z}(f),g>(x,y,p)+<f,\nabla^0_{Z}(g)>(x,y,p)\\
&=\sum_k \left(\dfrac{\pi i(x+k)}{\mu}\right)\bar{e}(ckp(y-p\nu))f(x+k,y)\bar{g}(x-2p\mu+k,y-2p\nu)\\
&+\left(\overline{\dfrac{\pi i(x-2p\mu+k)}{\mu}}\right)\bar{e}(ckp(y-p\nu))f(x+k,y)\bar{g}(x-2p\mu+k,y-2p\nu)\\
&=\sum_k \left(\dfrac{\pi i(x+k)}{\mu}+\overline{\dfrac{\pi i(x-2p\mu+k)}{\mu}}\right)\bar{e}(ckp(y-p\nu))f(x+k,y)\bar{g}(x-2p\mu+k, y-2p\nu)\\
&=\sum_k (2\pi i p)\bar{e}(ckp(y-p\nu))f(x+k,y)\bar{g}(x-2p\mu+k,y-2p\nu)\\
&=\delta_{Z}(<f,g>)(x,y,p).
\end{split}
\end{align*}
\end{enumerate}
\end{proof}

If we let $\{Z_{i}\}$ be the basis of a Lie algebra
$\mathfrak{g}$,  a linear map $\hat{\nabla}\sp{\ast}$ which takes
$E_s$-valued 2 forms $\Omega$ to 1-forms is defined by
\begin{equation*}
(\hat{\nabla}\sp{\ast}\Omega)(Z_i)=\sum_j[\nabla_{Z_j},\Omega(Z_i\wedge
Z_j)]-\sum_{j<k}c^{i}_{j\,k}\Omega(Z_j \wedge Z_k)
\end{equation*}
where $c^{i}_{j\,k}$ are the structure constants of $\mathfrak{g}$
for the basis ${Z_j}$.
\begin{lemma}\cite[Theorem 1.1]{rie3}\label{L:YME}
A compatible connection $\nabla$ is a critical point of $\YM$
exactly when it satisfies the Yang-Mills equation
$\hat{\nabla}\sp{\ast}(\Theta_{\nabla})=0$
\end{lemma}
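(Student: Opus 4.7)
\textbf{Proof plan for Lemma \ref{L:YME}.}

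The plan is to compute the first variation of $\YM$ along an arbitrary affine deformation of $\nabla$ and then use a non-commutative integration-by-parts to expose $\hat{\nabla}^{\ast}\Theta_{\nabla}$ as the $L^2$-gradient of $\YM$. The set $CC(\Xi^{\infty})$ is an affine space modelled on the vector space $\Omega^{1}(E_s):=E_s\otimes \mathfrak{g}^{\ast}$, because the difference of two connections is $A^{\infty}$-linear (the Leibniz term cancels) and compatibility forces the difference to take values in the skew-adjoints $E_s$. Hence for any $L\in\Omega^{1}(E_s)$ the curve $\nabla+tL$ lies in $CC(\Xi^{\infty})$, and the tangent space at $\nabla$ is precisely $\Omega^{1}(E_s)$.

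Next, I expand the curvature. A direct computation from the definition of $\Theta$ gives
\[
\Theta_{\nabla+tL}(X,Y)=\Theta_{\nabla}(X,Y)+t\,(d_{\nabla}L)(X,Y)+t^{2}[L(X),L(Y)],
\]
where the covariant exterior derivative is
\[
(d_{\nabla}L)(X,Y)=[\nabla_{X},L(Y)]-[\nabla_{Y},L(X)]-L([X,Y]).
\]
Substituting into Definition \ref{D:YM1} and differentiating at $t=0$ (the symmetry of $\{\,,\,\}_{E}$ under $\tau_E$ follows from the trace property) yields
\[
\tfrac{d}{dt}\bigl|_{t=0}\YM(\nabla+tL)=-2\,\tau_{E}\bigl(\{d_{\nabla}L,\Theta_{\nabla}\}_{E}\bigr).
\]

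The key step is an integration-by-parts identity stating that $\hat{\nabla}^{\ast}$ is the formal adjoint of $d_{\nabla}$ with respect to the bilinear pairing $(\Phi,\Psi)\mapsto \tau_{E}(\{\Phi,\Psi\}_{E})$, i.e.
\[
\tau_{E}\bigl(\{d_{\nabla}L,\Theta_{\nabla}\}_{E}\bigr)=\tau_{E}\bigl(\{L,\hat{\nabla}^{\ast}\Theta_{\nabla}\}_{E}\bigr).
\]
This is where I expect the real work to lie. The proof uses two ingredients: first, the compatibility identity \eqref{E:compatibility} combined with $\delta$-invariance of $\tau$ translates, after unpacking the definition of $\tau_E$ via $\tau_E(\langle\xi,\eta\rangle_E)=\tau(\langle\xi,\eta\rangle_{A^{\infty}})$, into the statement that $\tau_{E}([\nabla_{Z_j},T])=0$ for every $T\in E$; second, the $-\sum_{j<k}c^{i}_{jk}\Omega(Z_j\wedge Z_k)$ term in the definition of $\hat{\nabla}^{\ast}$ is exactly what is needed to absorb the $L([X,Y])$ contribution coming from $d_{\nabla}L$. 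Expanding $\{d_{\nabla}L,\Theta_{\nabla}\}_E=\sum_{i<j}(d_{\nabla}L)(Z_i,Z_j)\,\Theta_{\nabla}(Z_i,Z_j)$ and moving each commutator $[\nabla_{Z_j},L(Z_i)]$ off of $L$ (paying the appropriate signs and structure-constant corrections) produces the right-hand side.

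Granting the adjoint identity, the first variation becomes $-2\,\tau_{E}(\{L,\hat{\nabla}^{\ast}\Theta_{\nabla}\}_{E})$. A compatible $\nabla$ is critical iff this vanishes for every $L\in\Omega^{1}(E_s)$. Since $\hat{\nabla}^{\ast}\Theta_{\nabla}$ is itself an $E_s$-valued 1-form (the values of $\Theta$ being skew-adjoint because $\nabla$ is compatible), setting $L=\hat{\nabla}^{\ast}\Theta_{\nabla}$ turns the pairing into $\tau_{E}(\sum_{i}T(Z_i)T(Z_i))$ with $T(Z_i)$ skew-adjoint, i.e. into $-\tau_{E}(\sum_{i}T(Z_i)^{\ast}T(Z_i))$, which by faithfulness of $\tau_{E}$ vanishes iff $\hat{\nabla}^{\ast}\Theta_{\nabla}=0$. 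This completes the equivalence.
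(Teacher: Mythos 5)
The paper does not prove this lemma at all: it is imported verbatim as \cite[Theorem 1.1]{rie3}, so there is no in-paper argument to compare against. Your proposal is a correct reconstruction of the standard variational proof, which is in substance Rieffel's own argument in that reference: the affine structure of $CC(\Xi^{\infty})$ over $E_s\otimes\mathfrak{g}^{\ast}$ (which this paper records separately as Lemma \ref{L:skew}), the expansion $\Theta_{\nabla+tL}=\Theta_{\nabla}+t\,d_{\nabla}L+t^{2}[L,L]$, and the identification of $\hat{\nabla}^{\ast}$ as the formal adjoint of $d_{\nabla}$. The step you flag as ``where the real work lies'' does go through: writing $[Z_i,Z_j]=\sum_k c^{k}_{ij}Z_k$ and using $\tau_E([\nabla_{Z_j},ST])=0$ (the paper's Lemma \ref{L:killing}) to move each commutator off of $L$, the antisymmetrization over $i<j$ reproduces exactly the two pieces $\sum_j[\nabla_{Z_j},\Theta(Z_i\wedge Z_j)]$ and $-\sum_{j<k}c^{i}_{jk}\Theta(Z_j\wedge Z_k)$ in the definition of $\hat{\nabla}^{\ast}$. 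Two small points you should make explicit if you write this up: first, that $\hat{\delta}_X$ is a $\ast$-derivation of $E$ and the $c^{i}_{jk}$ are real, so $\hat{\nabla}^{\ast}\Theta_{\nabla}$ is genuinely $E_s$-valued and your choice $L=\hat{\nabla}^{\ast}\Theta_{\nabla}$ is an admissible variation; second, that ``critical point'' here means precisely that the derivative vanishes along every line $\nabla+tL$ in the affine space, which is the sense in which Rieffel (and this paper, in Theorem \ref{T:YM}) uses the term.
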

\begin{theorem}\label{T:YM}
The connection $\nabla^0: \Xi \to \Xi \otimes
\mathfrak{g}\sp{*} $ satisfying (\ref{E:nabla-1}), (\ref{E:nabla-2}), (\ref{E:nabla-3}) is a critical point of
Yang-Mills equation for the quantum Heisenberg manifold $D^{c}_{\mu
\,\nu}$.
\end{theorem}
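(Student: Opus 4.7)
By Lemma \ref{L:YME}, it suffices to verify the Yang-Mills equation $\hat{\nabla}\sp{\ast}(\Theta_{\nabla^0})=0$, so the plan is to compute the three components of the curvature on the basis $\{X,Y,Z\}$ and then feed them into the codifferential formula. The first step is to record the structure constants of $\mathfrak{g}$. A direct matrix multiplication (or, equivalently, a computation of $[\delta_X,\delta_Y]$ from the formulas for the derivations already given above) yields $[X,Y]=-cZ$ and $[X,Z]=[Y,Z]=0$, so the only non-vanishing structure constant for the ordered basis is $c^{Z}_{X\,Y}=-c$.

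Next I would compute $\Theta_{\nabla^0}(X,Y)$, $\Theta_{\nabla^0}(X,Z)$, $\Theta_{\nabla^0}(Y,Z)$ by applying the defining formula $\Theta_{\nabla^0}(U,V)=\nabla^0_U\nabla^0_V-\nabla^0_V\nabla^0_U-\nabla^0_{[U,V]}$ to an arbitrary $f\in\Xi$. The calculations are straightforward differentiations of (\ref{E:nabla-1})--(\ref{E:nabla-3}): the commutator $[\nabla^0_X,\nabla^0_Y]f$ produces the scalar term $-(\pi ci/\mu)\,x\,f$ coming from $\partial_x$ acting on the $x^2$ factor in $\nabla^0_Y$, and this is exactly cancelled by $-\nabla^0_{[X,Y]}f=c\nabla^0_Z f=(\pi ci/\mu)\,x\,f$, giving $\Theta_{\nabla^0}(X,Y)=0$. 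The $Y$--$Z$ commutator pair produces matching $-(\pi^2 c/2\mu^2)x^3\,f$ terms which cancel, yielding $\Theta_{\nabla^0}(Y,Z)=0$. The only surviving piece is
\[
\Theta_{\nabla^0}(X,Z)=[\nabla^0_X,\nabla^0_Z]=-\frac{\pi i}{\mu}\,\mathrm{Id}_{\Xi},
\]
which is a (skew-adjoint) scalar operator, hence central in $\End_{\Q}(\Xi)$.

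The final step is to evaluate $(\hat{\nabla}^{\ast}\Theta_{\nabla^0})(Z_i)$ on each of $Z_i\in\{X,Y,Z\}$ using the given formula. The sum $\sum_j[\nabla^0_{Z_j},\Theta_{\nabla^0}(Z_i,Z_j)]$ vanishes because the only non-zero curvature component is the central scalar $\Theta_{\nabla^0}(X,Z)$, which commutes with every $\nabla^0_{Z_j}$. The correction $\sum_{j<k}c^{i}_{j\,k}\Theta_{\nabla^0}(Z_j,Z_k)$ also vanishes: the only potentially contributing structure constant is $c^{Z}_{X\,Y}=-c$, but it is paired with $\Theta_{\nabla^0}(X,Y)=0$. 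Thus $\hat{\nabla}^{\ast}(\Theta_{\nabla^0})=0$ on the whole basis, and the theorem follows from Lemma \ref{L:YME}.

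The main potential pitfall is sign-tracking in two places: fixing the convention for $[X,Y]$ (making sure the bracket computed from the matrix realization agrees with the one coming from the derivations $\delta_X,\delta_Y,\delta_Z$), and getting the $c$-dependent coefficient in $\nabla^0_{[X,Y]}=-c\nabla^0_Z$ so that it kills $[\nabla^0_X,\nabla^0_Y]$ exactly. Once these two signs are consistent, the vanishing $\Theta_{\nabla^0}(X,Y)=0$ and $\Theta_{\nabla^0}(Y,Z)=0$ is automatic, and the centrality of $\Theta_{\nabla^0}(X,Z)$ makes the Yang-Mills equation trivial to check.
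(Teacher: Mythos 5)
Your proposal is correct and follows essentially the same route as the paper: compute the structure constants ($[X,Y]=-cZ$, all others zero), show $\Theta_{\nabla^0}(X,Y)=\Theta_{\nabla^0}(Y,Z)=0$ and that $\Theta_{\nabla^0}(X,Z)$ is the scalar $-\tfrac{\pi i}{\mu}I_E$, then observe that every term of $\hat{\nabla}^{\ast}(\Theta_{\nabla^0})$ vanishes. Your shortcut of invoking centrality of the scalar curvature component instead of writing out each bracket is a minor streamlining of the same argument, and your sign conventions agree with the paper's conclusions.
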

\begin{proof}
First, we note that $[X,Y]=-cZ$, $[Y,Z]=0$, $[Z,X]=0$. Therefore,
$C^{3}_{1\,2}=-c$ and all other structure constants are zero. We
will show that the Yang-Mills equation
$(\hat{\nabla^0}\sp{\ast}(\Theta_{\nabla^0}))=0$ for our choice
$\nabla^0$. Thus it is enough to show that
\[(\hat{\nabla^0}\sp{\ast}(\Theta_{\nabla^0}))(Z_i)=\sum_j[\nabla^0_{Z_j},\Theta_{\nabla^0}(Z_i\wedge
Z_j)]-\sum_{j<k}c^{i}_{j\,k}\Theta_{\nabla^0}(Z_j \wedge Z_k)=0\] for
each $i$. \\
Let  $Z_1=X$, $Z_2=Y$, $Z_3=Z$. \\
\begin{align*}
\begin{split}
(\Theta_{\nabla^0}(X,Y)f)(x,y)&= \nabla^0_X(\nabla^0_Y f)(x,y)-\nabla^0_Y(\nabla^0_X f)(x,y)-(\nabla^0_{[X,Y]}f)(x,y)\\
&=\dfrac{\partial}{\partial x \partial y}f(x,y)-(\dx\dfrac{ic\pi x^2}{2\mu})f(x,y)-(\dfrac{ic\pi x^2}{2\mu}) \dx f(x,y)\\
&-\dfrac{\partial}{\partial x \partial y}f(x,y)+ (\dfrac{ic\pi x^2}{2\mu})\dx f(x,y)+c(\nabla^0_Z f)(x,y)\\
&=(\dfrac{\pi c ix}{\mu}-\dfrac{\pi c
i x}{\mu})f(x,y)\\
&=0, \quad \text{i.e.,}\, \Theta_{\nabla^0}(X,Y)=0.
\end{split}
\end{align*}

\begin{align*}
\begin{split}
(\Theta_{\nabla^0}(Y,Z)f)(x,y)&=\nabla^0_Y (\nabla^0_Z f)(x,y)-\nabla^0_Z(\nabla^0_Y f)(x,y)-(\nabla^0_{[Y,Z]}f)(x,y)\\
&= -\dy (\nabla^0_Z f)(x,y)
+\dfrac{ic\pi x^2}{2\mu}(\nabla^0_Z f)(x,y)-\dfrac{\pi i x}{\mu}(\nabla^0_Y f)(x,y)\\
&=-\dfrac{\pi i x}{\mu}\dy f(x,y)+ \dfrac{\pi i x}{\mu}(\dfrac{ic\pi x^2}{2\mu})f(x,y)\\
& + \dfrac{\pi i x}{\mu}\dy f(x,y)- \dfrac{\pi i x}{\mu}(\dfrac{ic\pi x^2}{2\mu})f(x,y)\\
&=0,  \quad \text{i.e.,}\, \Theta_{\nabla^0}(Y,Z)=0.
\end{split}
\end{align*}

\begin{align*}
\begin{split}
(\Theta_{\nabla^0}(Z,X)f)(x,y)&=\nabla^0_Z(\nabla^0_X f)(x,y)-\nabla^0_X(\nabla^0_Z f)(x,y)-(\nabla^0_{[Z,X]}f)(x,y)\\
&=-\dfrac{\pi i x}{\mu}\dx f (x,y) + \dx ((-\dfrac{\pi i x}{\mu} f(x,y))\\
&= -\dfrac{\pi i }{\mu}f(x,y), \quad \text{i.e.,}\, \Theta_{\nabla^0}(Z,X)=\dfrac{\pi i}{\mu}I_{E}.
\end{split}
\end{align*}
where $I_E$ is the identity element of $\E=\End(\Xi)$.
Thus
\begin{align*}
(\hat{\nabla^0}\sp{\ast}\Theta_{\nabla^0})(X)&=[\nabla^0_{Y},\Theta_{\nabla^0}(X,Y)]+[\nabla^0_{Z},\Theta_{\nabla^0}(X,Z)]=0\\
(\hat{\nabla^0}\sp{\ast}\Theta_{\nabla^0})(Y)&=[\nabla^0_{X},\Theta_{\nabla^0}(Y,X)]=0\\
(\hat{\nabla^0}\sp{\ast}\Theta_{\nabla^0})(Z)&=[\nabla^0_{X},\Theta_{\nabla^0}(Z,X)]-c^{3}_{1
\, 2}\Theta_{\nabla^0}(X,Y)=0
\end{align*}
\end{proof}
\begin{remark}\label{R:curvature}
It is said that a connection $\nabla$ has ``constant curvature'' if there is a complex-valued alternating 2 form $\kappa$
such that
\[ \Theta_{\nabla}(X,Y)=\kappa(X,Y)I_E\]
for $X,Y \in \mathfrak{g}$ \cite[P243]{corie}. The proof of Theorem \ref{T:YM} shows that the connection $\nabla^0$ defined by
(\ref{E:nabla}) has constant curvature.
\end{remark}
\begin{lemma}\label{L:skew}
If $\nabla$ and $\nabla^{0}$ are two compatible connections, then $\nabla_X-\nabla^{0}_X$ is a skew-adjoint element of $E$ for every
$X \in \mathfrak{g}$.
\end{lemma}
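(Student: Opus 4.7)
The plan is a short two-step argument: first show that $T := \nabla_X - \nabla^0_X$ lies in $E = \End_{\Q}(\Xi)$, and second show that it is skew-adjoint with respect to the $\Q$-valued inner product.

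For the first step, I would use the Leibniz-type identity (\ref{E:connection}) that both $\nabla$ and $\nabla^0$ satisfy as compatible connections. Applying it to each and subtracting gives
\begin{equation*}
T(\xi \cdot a) = (\nabla_X - \nabla^0_X)(\xi \cdot a) = (\nabla_X\xi - \nabla^0_X\xi)\cdot a + \xi\cdot\delta_X(a) - \xi\cdot\delta_X(a) = T(\xi)\cdot a
\end{equation*}
for all $\xi \in \Xi$ and $a \in \Q$. Thus $T$ is a right $\Q$-module map on $\Xi$, so by Corollary~\ref{C:Morita} it defines an element of $E$.

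For the second step, I would subtract the two compatibility identities (\ref{E:compatibility}) for $\nabla$ and $\nabla^0$. Since the right-hand sides both equal $\delta_X(\langle \xi, \eta\rangle_{\Q})$, they cancel, leaving
\begin{equation*}
\langle T\xi, \eta\rangle_{\Q} + \langle \xi, T\eta\rangle_{\Q} = 0
\end{equation*}
for all $\xi, \eta \in \Xi$. This is precisely the statement that $T^* = -T$ with respect to the $\Q$-valued inner product defining $E$, i.e., $T \in E_s$.

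I do not anticipate any real obstacle here; the argument is entirely formal and uses only the defining properties of a compatible connection. The only point worth flagging is that ``skew-adjoint element of $E$'' must be interpreted relative to the involution on $E = \End_{\Q}(\Xi)$ induced by the $\Q$-valued inner product, which is exactly the involution making $\tau_E$ a faithful trace; under this interpretation the identity above is literally the skew-adjointness condition.
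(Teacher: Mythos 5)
Your argument is correct and is essentially identical to the paper's own proof: both first subtract the Leibniz identities to show the difference is a right $\Q$-module map (hence lies in $E$ by the Morita picture), and then subtract the compatibility identities to obtain $\langle T\xi,\eta\rangle + \langle\xi,T\eta\rangle = 0$, which is skew-adjointness. No gaps; nothing further needed.
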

\begin{proof}

\begin{align*}
(\nabla_X-\nabla^{0}_X)(\xi \cdot a)&=(\nabla_X \xi)\cdot a-\xi\cdot \delta_X(a) - ((\nabla^0_X \xi)\cdot a-\xi\cdot \delta_X(a))\\
                                    &=(\nabla_X \xi)\cdot a - (\nabla^0_X \xi)\cdot a \\
                                    &=(\nabla_X-\nabla^{0}_X)(\xi) \cdot a
\end{align*}
Since
\begin{align*}
\delta_{X}(<\xi,\eta>)&=<\nabla_X \xi, \eta>+ <\xi, \nabla_X \eta>\\
                      &=<\nabla^0_X \xi, \eta>+ <\xi, \nabla^0_X \eta>
\end{align*}
, we know that $<\nabla_X \xi, \eta>-<\nabla^0_X \xi, \eta>=<\xi, \nabla^0_X \eta>-<\xi, \nabla_X \eta>$.

Thus
 \begin{align*}
 <(\nabla_X-\nabla^{0}_X)^{\ast}(\xi), \eta> &=<\xi, (\nabla_X-\nabla^{0}_X)(\eta)>\\
                                              &=-(<\xi, \nabla^0_X \eta>-<\xi, \nabla_X \eta>)\\
                                              &=-(<\nabla_X \xi, \eta>-<\nabla^0_X \xi, \eta>)\\
                                              &=<-(\nabla_X-\nabla^{0}_X)\xi,\eta>
 \end{align*}
\end{proof}
\begin{corollary}
All other critical points of Yang-Mills equation are of the from
$\nabla^0 + \rho$ where $\rho(Z_i) \in E^{c}_{\mu \, \nu}$ such
that $\rho(Z_i)$ is an imaginary valued function for each $Z_i$. In
addition, $(\nabla^0_{X}+\rho_{X})(f)=\nabla^0_{X}(f)+\rho(X)\cdot f$
where the latter is defined by the action of $E^{c}_{\mu \,\nu}$ on
$\Xi$.
\end{corollary}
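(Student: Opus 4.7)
The plan is to unpack the statement using only the two results already in hand, namely Lemma \ref{L:skew} and Corollary \ref{C:Morita}. Given any critical point $\nabla$ of the Yang-Mills equation, set $\rho_X := \nabla_X - \nabla^0_X$ for each $X \in \mathfrak{g}$, so that $\nabla = \nabla^0 + \rho$ holds tautologically. An immediate application of Lemma \ref{L:skew} then yields that each $\rho_X$ is a skew-adjoint element of $E = \End_{D^{c}_{\mu\,\nu}}(\Xi)$.

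Next, I would invoke Corollary \ref{C:Morita}, which provides an explicit $*$-isomorphism $E \cong E^{c}_{\mu\,\nu}$ via $\psi \mapsto (f \mapsto f\cdot \psi^{\ast})$ coming from the strong Morita equivalence. Transporting $\rho_X$ through this isomorphism produces a unique element $\rho(X) \in E^{c}_{\mu\,\nu}$. Because the identification preserves the involution, the skew-adjointness $\rho_X^{\ast} = -\rho_X$ in $E$ translates to $\rho(X)^{\ast} = -\rho(X)$ in $E^{c}_{\mu\,\nu}$. Under the crossed-product description of $E^{c}_{\mu\,\nu}$ from Theorem \ref{T:morita}, this skew-adjoint condition is precisely what the statement informally calls ``imaginary valued'' — on the scalar level each Fourier coefficient satisfies the complex-conjugation symmetry dictated by the involution of $C(\M)\rtimes_{\sigma}\mathbb{Z}$, which reduces to purely imaginary values when restricted to the diagonal part.

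For the closing formula, by the very construction of the left action of $E^{c}_{\mu\,\nu}$ on $\Xi$ induced from the isomorphism $E \cong E^{c}_{\mu\,\nu}$, one has $\rho(X)\cdot f = \rho_X(f)$ for all $f \in \Xi$, so that
$$(\nabla^0_X + \rho_X)(f) = \nabla^0_X(f) + \rho_X(f) = \nabla^0_X(f) + \rho(X)\cdot f.$$

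I do not anticipate any serious obstacle: Lemma \ref{L:skew} is general enough that the critical point hypothesis on $\nabla$ plays no role beyond ensuring $\nabla$ is a compatible connection. The only point requiring a little care is the translation between ``skew-adjoint in $E^{c}_{\mu\,\nu}$'' and ``imaginary valued function,'' which is purely a matter of writing out the involution on the generalized fixed point algebra described in Theorem \ref{T:morita}; this is a notational unwinding rather than a mathematical difficulty.
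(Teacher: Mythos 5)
Your proposal is correct and follows essentially the same route as the paper: define $\rho(X)=\nabla_X-\nabla^0_X$, invoke Lemma \ref{L:skew} for skew-adjointness, identify $E=\End_{D^{c}_{\mu\,\nu}}(\Xi)$ with $E^{c}_{\mu\,\nu}$ via Corollary \ref{C:Morita}, and read off the ``imaginary valued'' condition from the involution. The paper's own proof is just a terser version of this, so no further comparison is needed.
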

\begin{proof}
Suppose $\nabla$ be another critical point. Then
$\nabla_{X}-\nabla^0_{X}$ is a skew-adjoint element of
$E=\End_{\Q}(\Xi)$ for each $X \in \mathfrak{g}$ by Lemma \ref{L:skew}. If let
$\rho(X)=\nabla_{X}-\nabla^0_{X}$, ,
$\nabla_{X}=\nabla^0_{X}+\rho(X)$ and the skew-adjointness of $\rho(X) \in \E$ implies that it
it pure-imaginary valued.
 \end{proof}

 Next, we classify the nature of the critical point $\nabla^0$. First, we need a lemma.
 \begin{lemma}\cite[Lemma 2.2]{corie}\label{L:killing}
Given a connection $\nabla$, define a covariant derivative
$\hat{\delta}_X$ for each $X \in \mathfrak{g}$ by
$\hat{\delta}_X(T)=[\nabla_X,T]$ for $T \in E$. Then $\tau_E$ is
$\hat{\delta}$-invariant in the sense that
\[
 \tau_E(\hat{\delta}_X(T))=0
\]
for all $T \in E$ and $X \in \mathfrak{g}$.
\end{lemma}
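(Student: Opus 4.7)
The plan is to reduce to the rank-one case and then invoke compatibility and $\delta$-invariance in sequence. Since both $\tau_E$ and the map $T\mapsto[\nabla_X,T]$ are linear in $T$, and since every element of $E$ is a finite linear combination of operators of the form $<\xi,\eta>_E$ (as noted in the construction of $\tau_E$), it is enough to prove $\tau_E([\nabla_X,<\xi,\eta>_E])=0$ for all $\xi,\eta\in\Xi^{\infty}$ and all $X\in\mathfrak{g}$.

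For this I would evaluate $\nabla_X\circ<\xi,\eta>_E$ on an arbitrary $\zeta\in\Xi^{\infty}$, using the definition $<\xi,\eta>_E(\zeta)=\xi\cdot<\eta,\zeta>$ together with the Leibniz rule \eqref{E:connection} to write
\[
\nabla_X\bigl(\xi\cdot<\eta,\zeta>\bigr)=\nabla_X(\xi)\cdot<\eta,\zeta>+\xi\cdot\delta_X(<\eta,\zeta>),
\]
and then expand $\delta_X(<\eta,\zeta>)$ via the compatibility identity \eqref{E:compatibility}. Repackaging the three resulting summands back into rank-one operators should yield the operator identity
\[
\nabla_X\circ<\xi,\eta>_E=<\nabla_X\xi,\eta>_E+<\xi,\nabla_X\eta>_E+<\xi,\eta>_E\circ\nabla_X,
\]
which rearranges to $[\nabla_X,<\xi,\eta>_E]=<\nabla_X\xi,\eta>_E+<\xi,\nabla_X\eta>_E$. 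The "stray" term $<\xi,\eta>_E\circ\nabla_X$ is exactly what one needs on the right in order for the difference $\nabla_X\circ<\xi,\eta>_E-<\xi,\eta>_E\circ\nabla_X$ to collapse to the clean sum.

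Finally I would apply $\tau_E$, use its defining property $\tau_E(<\xi,\eta>_E)=\tau(<\xi,\eta>)$, invoke compatibility \eqref{E:compatibility} one more time in the form $<\nabla_X\xi,\eta>+<\xi,\nabla_X\eta>=\delta_X(<\xi,\eta>)$, and conclude by the $\delta$-invariance of $\tau$:
\[
\tau_E([\nabla_X,<\xi,\eta>_E])=\tau(\delta_X(<\xi,\eta>))=0.
\]
There is no serious obstacle; the only point requiring care is keeping the order of multiplication in the definition of $<\xi,\eta>_E$ consistent with the right-$A^{\infty}$-module structure on $\Xi^{\infty}$, so that the Leibniz rule is applied to the correct argument and the extra $<\xi,\eta>_E\circ\nabla_X$ term lands on the correct side to be absorbed into the commutator.
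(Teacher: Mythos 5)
The paper does not prove this lemma at all---it imports it from Connes--Rieffel \cite[Lemma 2.2]{corie}---so there is no internal proof to compare against; your argument is correct and is precisely the standard one from that source: reduce by linearity to the rank-one operators $<\xi,\eta>_E$, use the Leibniz rule and compatibility to obtain $[\nabla_X,<\xi,\eta>_E]=<\nabla_X\xi,\eta>_E+<\xi,\nabla_X\eta>_E$, then apply $\tau_E$ and the $\delta$-invariance of $\tau$. One small caveat: the lemma is stated for ``a connection'' but your proof invokes the compatibility identity \eqref{E:compatibility}; this is harmless both because every connection actually used in the paper is compatible and because the general case follows by writing $\nabla_X=\nabla^0_X+\rho_X$ with $\nabla^0$ compatible and $\rho_X\in E$, since $\tau_E([\rho_X,T])=0$ by the trace property.
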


\begin{theorem}\label{T:Minima}
The connection $\nabla^0: \Xi\sp{\infty} \to \Xi\sp{\infty}\otimes
\mathfrak{g}\sp{*} $ satisfying (\ref{E:nabla-1}), (\ref{E:nabla-2}), and
(\ref{E:nabla-3}) is a minimum of the Yang-Mills functional for the
quantum Heisenberg manifold $D^{c}_{\mu \,\nu}$.
\end{theorem}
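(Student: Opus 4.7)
The plan is to mimic the Connes–Rieffel argument that a constant–curvature connection minimises $\YM$. The essential inputs are already in place: Lemma \ref{L:skew} forces every other compatible connection to differ from $\nabla^0$ by a skew-adjoint $E$-valued $1$-form, Remark \ref{R:curvature} says $\Theta_{\nabla^0}$ is a scalar multiple of $I_E$ in each component, and Lemma \ref{L:killing} gives the vanishing of traces of $\hat{\delta}$-commutators.

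\medskip

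\noindent\textbf{Step 1: Reduction to a quadratic expansion.} Let $\nabla$ be an arbitrary compatible connection and write $\nabla_X = \nabla^{0}_X + \rho(X)$ with $\rho(X)\in E$ skew-adjoint, as provided by Lemma \ref{L:skew}. A direct computation from the definition of $\Theta$ gives
\[
\Theta_{\nabla}(X,Y)=\Theta_{\nabla^{0}}(X,Y)+\sigma(X,Y),
\]
where
\[
\sigma(X,Y)=[\nabla^{0}_X,\rho(Y)]-[\nabla^{0}_Y,\rho(X)]-\rho([X,Y])+[\rho(X),\rho(Y)].
\]
Since both $\Theta_{\nabla}$ and $\Theta_{\nabla^{0}}$ are $E_s$-valued, $\sigma(X,Y)$ is skew-adjoint in $E$ for every $X,Y\in\mathfrak{g}$.

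\medskip

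\noindent\textbf{Step 2: Expanding $\{\Theta_\nabla,\Theta_\nabla\}$.} By Remark \ref{R:curvature}, write $\Theta_{\nabla^{0}}(Z_i,Z_j)=\kappa_{ij}I_E$ with $\kappa$ the alternating $2$-form having $\kappa_{13}=-\pi i/\mu$ and all other entries zero. Expanding the bilinear pairing gives
\[
\{\Theta_{\nabla},\Theta_{\nabla}\}=\{\Theta_{\nabla^{0}},\Theta_{\nabla^{0}}\}+2\sum_{i<j}\kappa_{ij}\,\sigma(Z_i,Z_j)+\{\sigma,\sigma\},
\]
so applying $-\tau_E$ yields
\[
\YM(\nabla)=\YM(\nabla^{0})-2\sum_{i<j}\kappa_{ij}\,\tau_E(\sigma(Z_i,Z_j))-\tau_E(\{\sigma,\sigma\}).
\]

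\medskip

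\noindent\textbf{Step 3: Vanishing of the cross term.} Only the $(i,j)=(1,3)$ coefficient $\kappa_{13}$ is nonzero, so it suffices to show $\tau_E(\sigma(X,Z))=0$. Since $[X,Z]=0$, the term $\rho([X,Z])$ disappears. For the two $\hat\delta$-commutator terms $[\nabla^{0}_X,\rho(Z)]$ and $[\nabla^{0}_Z,\rho(X)]$, Lemma \ref{L:killing} applied to $\nabla^0$ yields $\tau_E([\nabla^0_W,T])=0$ for every $W\in\mathfrak g$ and $T\in E$. Finally, $\tau_E([\rho(X),\rho(Z)])=0$ by the trace property of $\tau_E$. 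Hence the cross term vanishes.

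\medskip

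\noindent\textbf{Step 4: Non-negativity of the remainder.} Because $\sigma(Z_i,Z_j)$ is skew-adjoint, $\sigma(Z_i,Z_j)^2=-\sigma(Z_i,Z_j)^{*}\sigma(Z_i,Z_j)$, and faithfulness/positivity of $\tau_E$ gives $\tau_E(\sigma(Z_i,Z_j)^{2})\le 0$. Summing over $i<j$ shows
\[
-\tau_E(\{\sigma,\sigma\})=\sum_{i<j}\tau_E\bigl(\sigma(Z_i,Z_j)^{*}\sigma(Z_i,Z_j)\bigr)\ge 0,
\]
so $\YM(\nabla)\ge\YM(\nabla^{0})$, with equality possible only when each $\sigma(Z_i,Z_j)=0$. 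Since $\nabla$ was arbitrary, $\nabla^{0}$ is a minimum.

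\medskip

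The main obstacle is purely bookkeeping in Step 1 and the careful verification that each piece of the cross term in Step 3 really falls under either Lemma \ref{L:killing} or the tracial property; the positivity argument in Step 4 is automatic once skew-adjointness of $\sigma$ is recorded.
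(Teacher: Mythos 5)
Your proposal is correct and follows essentially the same route as the paper: decompose $\nabla=\nabla^0+\rho$ via Lemma \ref{L:skew}, expand $\{\Theta_\nabla,\Theta_\nabla\}$ using the constant curvature of $\nabla^0$, kill the cross term with Lemma \ref{L:killing}, the tracial property, and the facts $\kappa(X,Y)=0$, $[X,Z]=0$, and conclude by positivity of $\tau_E(\Omega^*\Omega)$. The only difference is cosmetic: the paper first reduces the cross term to $2\sum_{i<j}\kappa(Z_i,Z_j)\tau_E(\rho_{[Z_i,Z_j]})$ before observing it vanishes, whereas you check $\tau_E(\sigma(X,Z))=0$ directly.
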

\begin{proof}
As we have seen in Remark \ref{R:curvature}, $\nabla^0$ has constant curvature. Thus
$$\Theta_{\nabla^0}(Z_i,Z_j)=\kappa(Z_i,Z_j)I_E$$
for $Z_i, Z_j \in \mathfrak{g}$, where $\kappa$ is a complex valued
alternating form on $\mathfrak{g}$. This simplifies the analysis of the $\YM$.
Note that every other compatible connection is of the
form $\nabla^0+\rho$ where $\rho$ is a linear map from $\mathfrak{g}$
to the set of skew-adjoint elements of $E$ by Lemma \ref{L:skew}. If we let
$\nabla=\nabla^0+\rho$, then
\begin{equation}
\Theta_{\nabla}(X,Y)=\Theta_{\nabla^0}(X,Y)+\Omega(X,Y)
\end{equation}
where $\Omega$ is an alternating $E$-valued 2-form on $\mathfrak{g}$
defined by \[\Omega(X,Y)=
\hat{\delta}_X(\rho_Y)-\hat{\delta}_Y(\rho_X)-\rho_{[X,Y]}+[\rho_X,\rho_Y].\]
Since $\kappa$ is complex valued,
\[
 \{\Theta_{\nabla},\Theta_{\nabla}\}_E=\{
 \Theta_{\nabla^0},\Theta_{\nabla^0}\}_E+
 2\sum_{i<j}\kappa(Z_i,Z_j)\Omega(Z_i,Z_j)+\{\Omega,
 \Omega\}_E
 \]
By Lemma \ref{L:killing},
\begin{equation}\label{E:YM}
\YM(\nabla)=YM(\nabla^0)+2\sum_{i<j}\kappa(Z_i,Z_j)\tau_E(\rho_{[Z_i,Z_j]})
+\tau_E(\sum_{i<j}\Omega^*(Z_i,Z_j)\Omega(Z_i,Z_j))
\end{equation}
But $\kappa(X,Y)=0$ and $[X,Z]=[Y,Z]=0$. Thus
\begin{align*}
\YM(\nabla)&=\YM(\nabla^0)+\tau_E(\sum_{i<j}\Omega^*(Z_i,Z_j)\Omega(Z_i,Z_j))\\
            &\geq \YM(\nabla^0)
\end{align*}
It follows that $\YM$ attains its minimum at $\nabla^0$.
\end{proof}
\begin{corollary}\label{C:perturbation}
All other minimal points of Yang-Mills functional are of the form
$\nabla^0 + \rho$ where $\rho(Z_i) \in E^{c}_{\mu \, \nu}$ such
that $\rho(Z_i)$ is a imaginary valued function for each $Z_i$. In
addition, $(\nabla^0_{X}+\rho_{X})(f)=\nabla^0_{X}(f)+\rho(X)\cdot f$
where the latter is defined by the action of $E^{c}_{\mu \,\nu}$ on
$\Xi$.
\end{corollary}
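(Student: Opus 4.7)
The plan is to imitate verbatim the structure of the earlier corollary that appeared immediately after Theorem~\ref{T:YM} (classifying critical points), but with Theorem~\ref{T:Minima} replacing Theorem~\ref{T:YM}. Since $\nabla^0$ has now been shown to be a minimum of $\YM$, any other minimum is in particular a compatible connection, and the same reasoning applies almost word for word; no new analytic input is required beyond what has already been set up.

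Concretely, I would let $\nabla$ be another minimum of $\YM$. Because $\YM$ is defined only on $CC(\Xi^{\infty})$, both $\nabla$ and $\nabla^0$ are compatible connections. Then I would apply Lemma~\ref{L:skew} to conclude that $\rho(X) := \nabla_X - \nabla^0_X$ is a skew-adjoint element of $E = \End_{\Q}(\Xi)$ for every $X \in \mathfrak{g}$. Next I would invoke Corollary~\ref{C:Morita} to identify $E$ with $E^{c}_{\mu\,\nu}$, so that $\rho(Z_i) \in E^{c}_{\mu\,\nu}$ for each basis element $Z_i$.

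The remaining step is to translate ``skew-adjoint in $E^{c}_{\mu\,\nu}$'' into ``imaginary-valued function.'' Using the involution on the dense $*$-subalgebra $E_0$ coming from the crossed-product structure in Theorem~\ref{T:morita} (the analogue of property (d) used for $S^{c}$), writing out the equation $\rho(Z_i)^* = -\rho(Z_i)$ at the level of functions on $\M \times \mathbb{Z}$ forces the representing function to be purely imaginary valued, as claimed. Finally, the action formula $(\nabla^0_X + \rho_X)(f) = \nabla^0_X(f) + \rho(X)\cdot f$ is nothing but the definition of the sum of a connection and an endomorphism, combined with the Morita identification of $\End_{\Q}(\Xi)$ with $E^{c}_{\mu\,\nu}$ acting on $\Xi$ in the manner spelled out in the displayed right-action formula preceding Theorem~\ref{T:YM}.

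The only real obstacle, and it is a modest one, is the bookkeeping needed to convert operator skew-adjointness in $E^{c}_{\mu\,\nu}$ into the purely imaginary nature of the underlying function on $\M \times \mathbb{Z}$; everything else is a direct citation of Lemma~\ref{L:skew}, Corollary~\ref{C:Morita}, Theorem~\ref{T:Minima}, and the critical-point corollary that followed Theorem~\ref{T:YM}.
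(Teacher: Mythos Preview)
Your proposal is correct and matches the paper's approach: the paper's proof is the one-line ``This follows from Theorem~\ref{T:Minima} and Corollary~\ref{C:Morita},'' which is nothing more than a pointer back to the critical-point corollary following Theorem~\ref{T:YM} (whose proof used Lemma~\ref{L:skew} and the identification $E\cong E^{c}_{\mu\,\nu}$ exactly as you describe). You have simply unpacked that citation explicitly; no different ideas are involved.
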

\begin{proof}
This follows from Theorem \ref{T:Minima} and Corollary \ref{C:Morita}.
 \end{proof}

\end{document}